\documentclass[11pt,oneside]{amsart}
\usepackage{amssymb, amsmath, amsthm}

\usepackage{epsfig}
\usepackage{graphicx}
\usepackage{color}
\usepackage{enumerate}
\usepackage{amsrefs}
\usepackage[pagewise]{lineno}

\usepackage{pinlabel}

\theoremstyle{plain}
\newtheorem{theorem}{Theorem}[section]

\newtheorem*{theorem*}{Theorem}

\newtheorem{corollary}[theorem]{Corollary}

\theoremstyle{definition}

\theoremstyle{definition}

% \numberwithin{equation}{section}

%\parskip 6.6pt
%\parindent 0pt

\newcommand{\up}{\uparrow}

\newcommand{\nil}{\varnothing}

\newcommand{\wihat}{\widehat}
\newcommand{\defn}[1]{\emph{#1}}

\newcommand{\boundary}{\partial}

\newcommand{\mc}[1]{\mathcal{#1}}

\newcommand{\genus}{\operatorname{g}} %genus
 %interior
\newcommand{\inter}[1]{\mathring{#1}}
\newcommand{\down}{\downarrow}

\newcommand{\co}{\mskip0.5mu\colon\thinspace}

   % The following mysterious maneuver gets rid of AMS junk at the top
   % and bottom of the first page.

%      \makeatletter
%      \def\@setcopyright{}
%      \def\serieslogo@{}
%      \makeatother

%\linenumbers

\begin{document}

   \title[]{Genus bounds bridge number for high distance knots}
   \author{Ryan Blair}
   \author{ Marion Campisi}
   \author{ Jesse Johnson}
   \author{ Scott A. Taylor}
   \author{ Maggy Tomova}
   \email{}
   \thanks{The authors are grateful to the American Institute of Mathematics for its support through the SQuaREs program.}
   % Note that the short title for running heads goes in square
   % brackets.  This is optional.  The long title goes in curly
   % braces.  In the long title, line breaks are indicated by \\

\begin{abstract}
If a knot $K$ in a closed, orientable 3-manifold $M$ has a bridge surface $T$ with distance at least 3 in the curve complex of $T - K$, then the genus of any essential surface in its exterior with non-empty, non-meridional boundary gives rise to an upper bound for the bridge number of $K$ with respect to $T$. In particular, a nontrivial, aspherical, and atoroidal knot $K$ with such a bridge surface has its bridge number bounded by 5 if $K$ has a non-trivial reducing surgery; 6 if $K$ has a non-trivial toroidal surgery; and $4g + 2$ if $K$ is null-homologous and has Seifert genus $g$.
\end{abstract}
\maketitle
\date{\today}

\section{Introduction}
If a knot $K$ in a 3-manifold $M$ is in bridge position with respect to a Heegaard surface $T$ for $M$, both bridge number $b(T)$ and distance $d_\mc{C}(T)$ are integer measures of the complexity of the bridge position. Both give rise to knot invariants (since we can minimize over all possible bridge positions for $K$) and both reflect, to some degree, the topology and geometry of the knot exterior. Although, in general, there is no relationship between bridge number and the genus of essential surfaces in the knot exterior, we show that, for knots with bridge surfaces of distance at least 3, the bridge number is bounded above by an explicit linear function of the genus of such a surface, assuming the surface has non-empty, non-meridional boundary. As a consequence, we show:

\begin{theorem}\label{apps1}
Suppose that $K$ is a non-trivial knot in a closed, connected orientable 3-manifold $M$. Let $T$ be a bridge surface for $(M,K)$, other than a 2 or 4 punctured sphere, and with $d_\mc{C}(T) \geq 3$. Then the following hold:
\begin{enumerate}
\item If $K$ is null-homologous in $M$ then $b(T) \leq 4g(K) + 2$ where $g(K)$ is the minimum genus of a Seifert surface for $K$.
\item If the exterior of $K$ is aspherical and non-trivial Dehn surgery on $K$ produces a reducible 3-manifold, then $b(T) \leq 5$.
\item If the exterior of $K$ is atoroidal, and non-trivial Dehn surgery on $K$ produces a toroidal 3-manifold, then $b(T) \leq 6$. Furthermore, if the surgery slope is non-longitudinal, then $b(T) \leq 5$.
\end{enumerate}
\end{theorem}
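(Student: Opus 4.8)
The plan is to derive all three statements from the main inequality sketched above: whenever $K$ lies in bridge position on a surface $T$ with $d_{\mc{C}}(T)\geq 3$ (and $T$ is not a $2$- or $4$-punctured sphere), every essential surface $S$ in the exterior $E(K)=M\setminus\interior N(K)$ with non-empty, non-meridional boundary obeys a bound of the form $b(T)\leq -2\chi(S)+4$, equivalently $b(T)\leq 4g(S)+2|\partial S|$. So for each case I would exhibit such a surface of the smallest available complexity and read off the constant. In every case the scheme is the same: produce a surface from the hypothesis, use the asphericity or atoroidality assumption to force it to meet $N(K)$, place it in minimal position to make the truncated surface essential, and observe that its boundary slope is the non-meridional surgery (or longitude) slope.

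For (1), since $K$ is null-homologous it bounds a Seifert surface, and I would take $S$ to be one of minimal genus $g=g(K)$. Such a surface is incompressible, and any non-trivial $\partial$-compression would produce a Seifert surface of strictly smaller genus, so $S$ is essential; its unique boundary curve is a longitude, which meets the meridian once and so is non-meridional. With $g(S)=g$ and $|\partial S|=1$ the inequality returns $b(T)\leq 4g+2$.

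For (2) and (3) I would pass to the manifold $M_r$ produced by the given non-trivial surgery along a slope $r\neq\mu$, writing $V$ for the filling solid torus so that $E(K)=M_r\setminus\interior V$. In (2), reducibility of $M_r$ provides an essential sphere $P$; because an aspherical exterior is irreducible, $P$ cannot be isotoped off $V$ and must meet it. After isotoping $P$ to meet $V$ in meridian disks and minimizing $|P\cap\partial V|$, the usual innermost-disk and outermost-arc arguments show that $S=P\cap E(K)$ is an essential planar surface with boundary of the non-meridional slope $r$. In (3), atoroidality plays the analogous role: an essential torus $\widehat T\subset M_r$ cannot lie in $E(K)$, so it meets $V$, and $S=\widehat T\cap E(K)$ is an essential genus-one surface whose boundary again has slope $r$. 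Feeding these into the main inequality and controlling $|\partial S|$ yields $b(T)\leq 5$ and $b(T)\leq 6$ respectively. For the refinement when $r$ is non-longitudinal, I would use that the class of $\partial S$ must vanish in $H_1(E(K))$; this homological constraint forces one fewer boundary curve (or a genus drop) on the torus remnant than in the general case, sharpening the bound to $5$.

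I expect the routine ingredients to be the asphericity/atoroidality implications and the minimal-position arguments establishing essentiality, which are standard. The main obstacle will be the step hidden in the previous paragraph: bounding the number of boundary components of the punctured sphere and punctured torus, that is, controlling how many meridian disks the reducing sphere or essential torus is forced to carry after intersection with $V$ is minimized. This is precisely where I would invoke (or reprove) the pertinent Dehn-surgery results, and it is also where the longitudinal versus non-longitudinal dichotomy enters, through the homology of $\partial S$ in $E(K)$.
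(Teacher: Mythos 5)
Your overall architecture matches the paper's: each case is reduced to exhibiting an essential surface with non-empty, non-meridional boundary (a minimal genus Seifert surface; the restriction to $M_K$ of a reducing sphere, which must meet the filling solid torus because the exterior is aspherical; the restriction of an essential torus, which must meet it because the exterior is atoroidal) and feeding that surface into a single complexity bound. That is exactly how the paper deduces Theorem \ref{apps1} from Corollary \ref{apps2}, and your case (1) goes through verbatim.

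The genuine gap is in the form of the bound you propose to use. You state the master inequality as $b(T)\leq -2\chi(S)+4$, i.e.\ $b(T)\leq 4g(S)+2|\partial S|$, which \emph{grows} with the number of boundary components. The inequality the paper actually proves (Theorem \ref{Main Theorem}, specialized to a connected $S$ with all boundary on $\partial\eta(K)$) is
\[
(b(T)-4)\,\Delta \;\leq\; \frac{4g(S)-4}{|\partial S|} + 2,
\]
with $|\partial S|$ in the \emph{denominator}: more boundary components make the bound better, not worse. (The reason is that the count of ``inactive labels'' is bounded by $8|\boundary_K S|\Delta$ independently of everything else, so the excess of labels over this is proportional to $b(T)|\partial S|\Delta$, and one divides through by $|\partial S|\Delta$.) Your weaker form is a true consequence, but it discards exactly the feature needed for cases (2) and (3): with $b(T)\leq 4g+2|\partial S|$ you would need $|\partial S|\leq 2$ for the punctured reducing sphere and $|\partial S|\leq 1$ for the punctured torus, and---as you yourself flag as ``the main obstacle''---there is no a priori bound on how many times such a surface meets the filling solid torus. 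With the correct inequality that obstacle evaporates: for a planar surface the right side is $-4/(|\partial S|\Delta)+2/\Delta+4<6$ for every $|\partial S|$, giving $b(T)\leq 5$, and for a genus one surface it is at most $6$. Likewise, the sharpening in case (3) does not come from a homological constraint forcing fewer boundary curves on the punctured torus; it comes from the factor $\Delta$ on the left side, since a surgery slope with $\Delta\geq 2$ pushes the genus one bound down to $5$. So as written your plan proves (1) but cannot close (2), (3), or the refinement; you need the $|\partial S|$- and $\Delta$-normalized inequality rather than the raw Euler characteristic bound.
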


The first conclusion is surprising, for if we drop the hypothesis that $d_\mc{C}(T) \geq 3$, there are genus 1 knots of arbitrarily high bridge number. For example, let $J_n$ be a sequence of knots in $S^3$ such that the minimum bridge number of a bridge sphere for $J_n$ goes to infinity with $n$. If $K_n$ is the Whitehead double of $J_n$, the Seifert genus of $K_n$ is 1 but the bridge number of $K_n$ is at least twice the bridge number of $J_n$ \cites{Schubert, Schultens}. We expect that there are genus 1 hyperbolic knots of arbitrarily large bridge number, but constructing them is beyond the scope of this paper.

The second conclusion puts strong restrictions on any potential counterexample to the cabling conjecture \cite{GAS}. For, suppose that a counterexample $K \subset S^3$ is in minimal bridge position with respect to a Heegaard sphere $T$. Hoffman \cite{H1} showed that $b(T) \geq 5$ and, in \cite{H2}, claims he has also proved (in unpublished notes) that $b(T) \geq 6$. If that result is correct, then our result reduces the cabling conjecture to studying knots having bridge spheres $T$ satisfying the simple combinatorial condition $d_\mc{C}(T) \leq 2$.  In a forthcoming paper, we will describe all knots in $S^3$ with a bridge sphere satisfying $d_\mc{C}(T) = 2$. Many of them, it turns out, contain an essential meridional planar surface in their exterior, much like when $d_\mc{C}(T) = 1$. Thus, resolving the cabling conjecture for knots with an essential tangle decomposition would be an important step towards resolving the cabling conjecture in general. Towards that end, Hayashi \cite{Hy} has shown that if $K$ has an essential tangle decomposition such that no two strands of either tangle are parallel, then $K$ satisfies the cabling conjecture and Taylor \cite{T} has shown that if $K$ is formed by attaching a ``complicated'' band to a two component link (e.g. if $K$ is a band sum) then $K$ also satisfies the cabling conjecture. However, the proof of the cabling conjecture for the case when $K$ has an essential meridional planar surface in its exterior remains incomplete.

With reference to the last conclusion, we note that if a knot $K \subset S^3$ lies in some complicated way on a knotted genus 2 surface $W \subset S^3$, then a Dehn surgery on $K$ corresponding to the (integral) slope of $W \cap (S^3 - \inter{\eta}(K))$ will likely produce a toroidal 3-manifold. Presumably, if the knotting of $W$ is complicated enough, then the bridge number of $K$ with respect to a Heegaard sphere can be made arbitrarily high. Eudave-Mu\~noz \cite{EM} has given examples of hyperbolic knots in $S^3$ with toroidal surgeries of half-integral slope. Our result shows that all knots with toroidal surgeries and with high bridge number cannot also have high distance bridge surfaces. 

\section{Background and Previous Results}\label{background}
Let $M$ be a compact, connected, orientable 3-manifold (possibly with boundary) and let $K \subset M$ be a nontrivial knot with a compact, orientable surface $S$ properly embedded in its exterior $M_K$. Let $\boundary_0 S = \boundary S \cap \boundary M$ and let $\boundary_K S = \boundary S - \boundary_0 S$. Assume that all the components of $\boundary_K S$ are parallel, essential and non-meridional curves. Let $\Delta$ be the minimal intersection number between a component of $\boundary_K S$ and a meridian of $K$. We say that a simple closed curve $\sigma \subset S$ is \defn{essential} in $S$ if it does not bound a disc in $S$ and if it is not isotopic to a component of $\boundary_K S$. Curves isotopic to $\boundary_0 S$ are considered to be essential for the purposes of this paper. An arc properly embedded in $S$ is \defn{essential} if it is not boundary parallel.

A \defn{compressionbody} $C$ is any space obtained from $F \times [0,1]$, with $F$ a closed connected surface, by attaching 2-handles and 3-handles along $F \times \{0\}$. We let $\boundary_+ C = F \times \{1\}$ and $\boundary_- C = \boundary C - \boundary_+ C$. The union $\tau$ of properly embedded arcs in $C$ is \defn{trivial} if $\tau$ is isotopic into $\boundary_+ C$ relative to $\boundary \tau$. If $\tau \subset C$ is trivial, a \defn{spine} $\Gamma$ for $(C,\tau)$ is an embedded graph in $C$ such that the exterior of $\Gamma \cup \boundary_- C$ is homeomorphic to $\boundary_+ C \times [0,1]$ intersecting $\tau$ in a union of vertical arcs.

A \defn{bridge surface} for $(M,K)$ is a closed separating surface $T \subset M$ such that the closure of each component of $M - T$ is a compressionbody intersecting $K$ in trivial arcs. We let $T_\up$ and $T_\down$ denote the closures of the components of $M - T$. A simple closed curve $\sigma \subset T_K = T - \inter{\eta}(K)$ is \defn{essential} if $\sigma$ does not bound a disc or once punctured disc in $T_K$. An arc $\sigma$ properly embedded in $T_K$ is \defn{essential} if it is not boundary parallel in $T_K$. The \defn{curve complex} $\mc{C}(T)$ of $T$ has vertices equal to isotopy classes of essential simple closed curves in $T_K$. Two vertices in $\mc{C}(T)$ are joined by an edge if the vertices have disjoint representatives in $T_K$. If $T$ is a surface other than a torus with 0 or 1 punctures or a sphere with 4 or fewer punctures, then $\mc{C}(T)$ is connected. Since, for us, $T$ is a bridge surface for a knot it cannot be a zero or once-punctured torus. The disc sets $\mc{D}^\up_\mc{C}$ and $\mc{D}^\down_\mc{C}$ for $\mc{C}(T)$ consist of those vertices that bound compressing discs for $T_K$ in $T_\up - \inter{\eta}(K)$ and $T_\down-\inter{\eta}(K)$ respectively. The \defn{bridge distance} $d_\mc{C}(T)$ of a bridge surface $T$ is defined to be the distance from $\mc{D}^\down_\mc{C}$ to $\mc{D}^\up_\mc{C}$ in $\mc{C}(T)$. This definition is a ready generalization of Hempel's definition \cite{Hempel} of distance for Heegaard surfaces (i.e. when $K = \nil$). If $d_\mc{C}(T) = 0$, there is a sphere in $M$ intersecting $T_K$ in a single essential loop. This implies that either $M - K$ is reducible or that $T$ is a stabilized bridge surface (in the sense of \cite{HS}.) If $d_\mc{C}(T) = 1$, the bridge surface can be untelescoped and, in most cases, there is an essential meridional surface in $M - K$ of genus at most the genus of $T$ \cite{HS}. The paper \cite{BTY} shows that bridge surfaces $T$ exist with $d_\mc{C}(T)$ arbitrarily high, and in \cite{IS} this result is improved to show that such surfaces continue to exist if the 3-manifold is fixed.

Rather than measuring the distance of $T$ in $\mc{C}(T)$, distance could be measured in the ``arc and curve complex''\cite{BS}. This gives rise to a different integer complexity of $T$, denoted $d_\mc{AC}(T)$. It is always the case that $d_\mc{AC}(T) \leq d_\mc{C}(T) \leq 2d_\mc{AC}(T)$ \cite[Lemma 2.9]{MHL1}. 

In \cite{MHL1}, we proved that there is a relationship between the distance $d_\mc{AC}(T)$ of a bridge surface $T$ for a knot $K$ in a compact, connected, orientable 3-manifold $M$ and the genus $\genus(S)$ of either an essential surface or a Heegaard surface $S$ in a manifold obtained by performing non-trivial Dehn surgery on $K$. Among other results, we showed that if $K \subset S^3$ has a surgery producing a reducible or toroidal 3-manifold, then $d_\mc{AC}(T) \leq 2$. Theorem \ref{apps1} refines this result by showing that $d_\mc{C}(T) \leq 2$ when $b(T)$ is large enough. We do not need to consider $d_\mc{AC}$ in this paper. 

\section{Theorems and Proofs}

Theorem \ref{apps1} is a specialization of:

\begin{corollary}\label{apps2}
Let $K$ be a nontrivial knot in a closed, connected, orientable 3-manifold $M$ and let $T$ be a bridge surface for $(M,K)$ with $\mc{C}(T)$ connected and $d_\mc{C}(T)\geq 3$.  Suppose that $M_K$ contains an essential, properly embedded, compact, connected orientable surface of genus $g$ with non-empty, non-meridional boundary on the boundary of a regular neighborhood of $K$. Then
\[ b(T)\leq \max(5,4g +2)\]
\end{corollary}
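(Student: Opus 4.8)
The plan is to argue by contrapositive: assuming $b(T) > \max(5, 4g+2)$, I would build a path of length at most $2$ in $\mc{C}(T)$ joining $\mc{D}^\down_\mc{C}$ to $\mc{D}^\up_\mc{C}$, contradicting $d_\mc{C}(T) \geq 3$. The first task is to put the essential surface $S$ into good position relative to $T$. Among all surfaces carrying the given boundary slope and genus, I would minimize $|S \cap T|$ subject to $S_\up = S \cap T_\up$ and $S_\down = S \cap T_\down$ being incompressible and $\partial$-incompressible in the compressionbody--tangle pairs $(T_\up, K \cap T_\up)$ and $(T_\down, K \cap T_\down)$; innermost-disc and outermost-arc exchanges remove closed curves of $S \cap T$ that bound discs or cut discs in a compressionbody and remove inessential arcs. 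The degenerate case $S \cap T = \nil$, in which $S$ lies in a single compressionbody with trivial tangle, must be dispatched separately using the fact that an essential surface with non-meridional boundary in such a pair is extremely restricted; this is where the base constant $5$ and the planar case are most delicate.

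The engine of the argument is a counting estimate coming from the knot's bridges. The surface $T$ meets $\partial \eta(K)$ in $2b(T)$ meridian circles, which cut $\partial \eta(K)$ into $2b(T)$ annuli lying alternately in $T_\up$ and $T_\down$. Since each component of $\partial S$ is non-meridional, it crosses each of these meridian circles at least $\Delta \geq 1$ times, so $\partial S \cap \partial T_K$ contains at least $2|\partial S|\, b(T)\, \Delta$ points. As these are precisely the endpoints of the essential arcs of $S \cap T_K$ on $S$, after the normalization above the number of such arcs is at least $|\partial S|\, b(T)\, \Delta$, growing linearly in $b(T)$. On the other hand $S$ has fixed topology, $\chi(S) = 2 - 2g - |\partial S|$, so the number of pairwise non-parallel essential arcs it can carry is bounded by a linear function of $g$ and $|\partial S|$. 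Hence once $b(T)$ exceeds the stated linear-in-$g$ threshold, a pigeonhole/Euler-characteristic count forces many parallel families of arcs; taking an outermost arc of $S \cap T_K$ on $S$ then cuts off a bigon disc $D$, a subdisc of $S$ lying in $T_\up$ (say), whose boundary consists of one arc on $T_K$ and one arc of $\partial S$ on $\partial \eta(K)$. Thus $D$ is a $\partial$-compressing disc for $T_K$ on that side.

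Finally I would convert these $\partial$-compressing discs into compressing or cut discs for $T_K$ on both sides and connect them cheaply. By minimality the $\partial\eta(K)$-arc of such a disc must run over a bridge, so $D$, assembled with a meridian disc of $\eta(K)$ along the bridge it crosses, produces a genuine compressing disc or a cut disc for $T_K$ in $T_\up$, i.e. a vertex of $\mc{D}^\up_\mc{C}$ (a cut disc being resolved against a neighbouring intersection curve to recover an honest curve-complex vertex). Producing such discs on both the $\up$ and $\down$ sides, with boundaries disjoint from a common essential curve of $S \cap T_K$, yields a path $\mc{D}^\down_\mc{C} \to c \to \mc{D}^\up_\mc{C}$ of length at most $2$, the contradiction sought. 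The main obstacle lies entirely in making the last two steps quantitative: extracting the \emph{exact} threshold $4g+2$, and the separate constant $5$ in the planar case $g=0$, requires careful bookkeeping of how many bridges a single parallel family can absorb, together with a delicate treatment of meridional (cut) discs, since $\mc{C}(T)$ records only curves essential and non-peripheral in $T_K$; controlling when a cut disc can be upgraded to, or traded for, an honest compressing disc near the same intersection curve is the crux of the whole estimate.
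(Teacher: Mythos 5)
Your proposal takes a genuinely different route from the paper --- a single fixed position of $S$, minimality of $|S\cap T|$, and a pigeonhole on parallel arc families --- whereas the paper sweeps $T$ through $M$, locates a \emph{maximally essential} interval $[a,b]$ of the sweepout, and applies Theorem \ref{Main Theorem}, whose engine is an index count on the pieces of $S$ cut along ``active'' arcs together with a ``constant path'' argument. Unfortunately your route has two genuine gaps. First, the step producing the discs is internally inconsistent: after you normalize $S$ so that $S_\up$ and $S_\down$ are incompressible and $\partial$-incompressible, no arc of $S\cap T_K$ cuts off a bigon in $S$ with $\partial_K S$ --- an arc bounding such a bigon is precisely a $\partial$-parallel (inessential) arc of $S\cap T_K$ in $S$, which your normalization removed. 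What many parallel \emph{essential} arcs give you is a stack of rectangles, each with two edges on $T_K$ and two on $\partial\eta(K)$; a rectangle is not a $\partial$-compressing disc for $T_K$, and you do not say how to extract one from it. Worse, even without normalizing, a single position of $S$ has no reason to exhibit such discs on \emph{both} sides of $T$ simultaneously --- this is exactly why the paper sweeps: the $\down$-side disc $\alpha$ appears just below $a$ and the $\up$-side disc $\beta$ just above $b$, at different times of the sweepout, and the length-$2$ path is completed by an arc that survives (a ``constant path'') across the whole interval.

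Second, and relatedly, your quantitative mechanism cannot close. A pigeonhole comparing the number of arcs ($\geq |\partial S|\,b(T)\,\Delta$) with the number of isotopy classes of essential arcs in $S$ ($O(g+|\partial S|)$) only tells you that some parallel family is large, and nothing bounds the size of a parallel family. The distance hypothesis must be what bounds it, but your sketch invokes $d_\mc{C}(T)\geq 3$ only at the very end, to rule out a length-$2$ path you have not actually constructed. In the paper the hypothesis enters earlier and differently: any inactive arc persisting through $[a,b]$ would, together with $\alpha$ and $\beta$, yield a length-$2$ path from $\mc{D}^\down_\mc{C}$ to $\mc{D}^\up_\mc{C}$ unless an endpoint of that arc lies on a boundary component of $T_K$ adjacent to $\alpha$ or $\beta$; this caps the inactive labels at $8|\partial_K S|\Delta$, forces at least $(2b(T)-8)|\partial_K S|\Delta$ labels to be adjacent to active arcs, and the index identity $\sum J(P_k)=-\chi(S)$ then bounds the number of active arcs by $-2\chi(S)$, giving the clean inequality $(b(T)-4)|\partial_K S|\Delta\leq -2\chi(S)$ from which both the constant $5$ (planar case) and $4g+2$ fall out with no delicate bookkeeping about cut discs. (Your worry about the case $S\cap T=\nil$ is moot: since $\partial S$ is non-empty and non-meridional it meets every one of the $2b(T)$ meridian circles of $T\cap\partial\eta(K)$, so $S\cap T\neq\nil$.)
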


In fact, Theorem \ref{Main Theorem} shows that the conclusion holds even if we relax the requirement that $S$ is essential. Before stating the theorem, we establish some notation and definitions.

Let $\Gamma_\down$ and $\Gamma_\up$ be spines for $(T_\down, K \cap T_\down)$ and $(T_\up, K \cap T_\up)$ respectively. The complement of $\Gamma_\down \cup \Gamma_\up \cup \boundary M$ in $M$ is homeomorphic to $T \times (0,1)$. Let $h\co M \to [0,1]$ be projection onto the second factor and extend $h$ so that $h(\boundary_- T_\down \cup \Gamma_\down) = 0$ and $h(\boundary_- T_\up \cup \Gamma_\up) = 0$. (Without loss of generality, we may assume that the choice of labels $T_\down$ and $T_\up$ makes this extension continuous.) The map $h$ is called a \defn{sweepout} of $(M,K)$ by $T$. For all $t \in (0,1)$, the surface $T_t = h^{-1}(t)$ is a surface isotopic to $T$ and transverse to $K$. Perturb $h$ so that $h|_S$ is a Morse function with critical points having distinct critical values. By putting a flat metric on the frontier of $K$, and isotoping $S$ and $h$ so that $\boundary_K S$ and $\boundary (T_t \cap M_K)$ are the union of geodesics for all $t$, we may assume that the quantity $|\boundary S \cap \boundary (T_t \cap M_K)|$ is constant. Hence,
\[
|\boundary S \cap \boundary (T_t \cap M_K)| = 2b(T)|\boundary_K S|\Delta.
\]

An interval $[a,b] \subset [0,1]$ is \defn{essential} for $S$ relative to $h$ if $a$ and $b$ are regular values for $h|_S$ and if for all regular values $t\in [a,b]$ all components of $T_t \cap S$ are essential in both surfaces. Let $\epsilon > 0$ be less than half the minimum distance between adjacent critical points.  An essential interval $[a,b]$ is \defn{maximally essential} for $S$ if, for the critical value $a_-$ just below $a$ and the critical value $b_+$ just above $b$, some arc or circle $\alpha$ of $T_{a_- - \epsilon} \cap S$ is essential in $T$ but bounds a compressing or boundary compressing disc for $T$ that lies in $T_\down$ and some arc or circle $\beta$ of $T$ is essential in $T_{b_+ + \epsilon} \cap S$ but bounds a compressing or boundary compressing disc for $T$ that lies in $T_\up$.

\begin{theorem}[Main Theorem]\label{Main Theorem}
Assume that $\mc{C}(T)$ is connected, $d_\mc{C}(T) \geq 3$, and that there is a maximally essential interval for $S$ relative to $T$. Then,
\[
(b(T) - 4)\Delta \leq \frac{4\genus(S) - 4|S| +2 |\boundary_0 S|}{|\boundary_K S|} + 2.
\]
\end{theorem}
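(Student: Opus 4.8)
The plan is to run a sweepout argument on the maximally essential interval $[a,b]$ and pay for the bridge number with the Euler characteristic of $S$. Throughout the interior of $[a,b]$ every component of $T_t \cap S$ is essential in both surfaces, while the normalization of $h$ keeps $|\boundary S \cap \boundary(T_t \cap M_K)| = 2b(T)|\boundary_K S|\Delta$ constant, so at a generic level $t_0 \in (a,b)$ the arcs of $S \cap T_{t_0}$ have exactly $2b(T)|\boundary_K S|\Delta$ endpoints on $\boundary_K S$. It is convenient to note first that the claimed inequality is equivalent, after clearing the denominator and using $-2\chi(S) = 4\genus(S) - 4|S| + 2|\boundary_0 S| + 2|\boundary_K S|$, to the clean statement $(b(T)-4)\,\Delta\,|\boundary_K S| \le -2\chi(S)$. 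Since the left side equals $\tfrac12 |\boundary S \cap \boundary(T_{t_0}\cap M_K)| - 4\Delta|\boundary_K S|$, the whole theorem reduces to the endpoint bound $|\boundary S \cap \boundary(T_{t_0}\cap M_K)| \le -4\chi(S) + 8\Delta|\boundary_K S|$.

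First I would use $d_\mc{C}(T)\ge 3$ to forbid simplifications across the interval. By definition of a maximally essential interval, an arc or circle of $T_{a_- - \epsilon}\cap S$ bounds a compressing or boundary compressing disc lying in $T_\down$, and one of $T_{b_+ + \epsilon}\cap S$ bounds such a disc in $T_\up$; after surgering arcs to curves these furnish vertices of $\mc{D}^\down_\mc{C}$ and $\mc{D}^\up_\mc{C}$. Because the level surfaces on either side of a single saddle carry disjoint essential curves, the sweepout traces a walk in $\mc{C}(T)$ from $\mc{D}^\down_\mc{C}$ to $\mc{D}^\up_\mc{C}$, and $d_\mc{C}(T) \ge 3$ then prevents any single level surface from carrying essential curves that compress to both sides. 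This is the mechanism that keeps $S \cap T_{t_0}$ genuinely essential across the interval, and it is what rules out the isotopies of $S$ that would otherwise destroy the arc count.

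Next I would convert the endpoint count into Euler characteristic. Put $S$ in minimal position so that no two arcs of $S \cap T_{t_0}$ are parallel in both $S$ and $T_{t_0}$, then cut $S$ along the essential arcs and circles of $S \cap T_{t_0}$ and sum the Euler characteristics of the complementary pieces. Since each arc is essential in $S$, no complementary piece is a boundary-parallel bigon, and an innermost/outermost analysis of the pieces bounds the number of such arcs linearly in $-\chi(S)$; doubling for the two endpoints of each arc, and absorbing the two distinguished ends of the interval together with the arcs that run to $\boundary_0 S$ into the additive term $8\Delta|\boundary_K S|$, yields the endpoint bound above. The winding factor $\Delta$ enters because the arcs are also essential in the punctured surface $T_{t_0}$, in which $\boundary_K S$ meets each meridian $\Delta$ times; dividing the endpoint bound by $2\Delta|\boundary_K S|$ returns the theorem.

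I expect the main obstacle to be the control of mutually parallel essential arcs in $S$. A priori a single essential arc can have arbitrarily many parallel copies in $S$, so the crude count of pairwise non-parallel essential arcs (bounded by $-3\chi(S)$) gives both the wrong coefficient and no control on the endpoints; the distance hypothesis and the minimal-position assumption must instead be leveraged to show that each parallelism class in $S$ is limited, essentially because parallel arcs that are also parallel in $T_{t_0}$ can be removed while the remaining ones are constrained by $d_\mc{C}(T)\ge 3$. Keeping exact track of which arcs meet $\boundary_K S$ versus $\boundary_0 S$, and of the factor $\Delta$, is precisely where the constants $4$ and $+2$ in the statement are won or lost.
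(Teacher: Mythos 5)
There is a genuine gap, and it sits exactly where you flagged your ``main obstacle.'' Your algebraic reduction to $(b(T)-4)\Delta|\boundary_K S| \le -2\chi(S)$ is correct, but your plan then tries to establish this as an endpoint bound at a \emph{single} generic level $t_0$, by cutting $S$ along $S \cap T_{t_0}$ and counting Euler characteristics of the pieces. This cannot work: by the normalization you yourself invoke, the number of endpoints at level $t_0$ is identically $2b(T)|\boundary_K S|\Delta$ --- it is the quantity being bounded, not something an Euler characteristic count at one level can control. Essentiality of the arcs in $S$ and in $T_{t_0}$ puts no bound on the size of parallel families, and the proposed remedy --- that parallel arcs ``can be removed'' or that $d_\mc{C}(T)\ge 3$ ``limits each parallelism class at a level'' --- is not an argument; removing intersections would change the very count you are estimating, and the distance hypothesis says nothing about a single level surface in isolation. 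Likewise, your stated mechanism for using $d_\mc{C}(T)\ge 3$ (``no single level carries essential curves compressing to both sides'') is essentially the content of $d_\mc{C}(T)\ge 1$ and yields no quantitative control on arcs.

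The paper's proof avoids the single-level trap by working across the entire maximally essential interval $[a,b]$ and partitioning arcs by their behavior over time. Arcs that participate in some saddle in $[a,b]$ (the ``active'' arcs) are bounded in number by $-2\chi(S)$ via an index count $J(P_k) = b_k/2 - \chi(P_k)$ on the components of $S$ cut along all active arcs and circles, using that each piece containing a critical point has index at least $1$ and at most two post-active arcs on its boundary. The arcs that persist unchanged through the whole interval (``constant paths'') are where $d_\mc{C}(T)\ge 3$ enters: if such an arc $\kappa_1$ had no endpoint on a boundary component of $T_K$ meeting $\alpha$ or $\beta$ (the curves bounding the compressing or boundary compressing discs guaranteed at the two ends of the maximally essential interval), then the frontiers of neighborhoods of $\alpha$, $\kappa_1$, $\beta$ would give a length-$2$ path in $\mc{C}(T)$ from $\mc{D}^\down_\mc{C}$ to $\mc{D}^\up_\mc{C}$, a contradiction. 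Hence every constant path has an endpoint on one of at most four distinguished boundary components of $T_K$, which caps the inactive endpoints at $8|\boundary_K S|\Delta$ and yields the theorem. Your proposal is missing both halves of this dichotomy, so as written it does not close.
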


\begin{proof}
 The proof is a variation of \cite[Theorem 3.1]{MHL1}.  Let $t_-, t_+ \in (0,1)$ be regular values of $h|_S$ such that there is a unique critical value $v$ of $h|_S$ in $[t_-,t_+]$. As $t \in [t_-,t_+]$ passes through $v$, a band is attached to one or two components of $T_{t_-} \cap S$ to create one or two components of $T_{t_+} \cap S$. All components of $T_{t_-} \cap S$ are disjoint in $S$ from all components of $T_{t_+} \cap S$ and, furthermore, under the natural identification of $T_t$ with $T$, all the components of $T_{t_-} \cap S$ can be isotoped to be disjoint in $T$ from all the components of $T_{t_+} \cap S$.

Let $[a,b]$ be a maximally essential interval for $h$ relative to $S$ and let $v \in [a,b]$ be a critical value of $h|_S$. Let $t_-$ and $t_+$ be regular values on either side of $v$ such that $v$ is the unique critical value of $h|_S$ in $[t_-, t_+]$. Suppose that $\sigma_-$ is the union of the components of $T_{t_-} \cap S$ that are banded together at $v$ to produce the components $\sigma_+$ of $T_{t_+}$. The components of $\sigma_-$ are called \defn{pre-active}, those of $\sigma_+$ are called \defn{post-active}, and an arc that is pre-active or post-active is also called simply \defn{active}.  Let $\mc{A}$ be the union of all active arcs and circles and let $\mc{V}$ be the union of all the critical values $v\in[a,b]$ of $h|_S$ such that there is an active arc at $v$. Figure \ref{Fig: activearcs} shows a pre-active arc and two post-active arcs at a critical point.

\begin{figure}[ht]
\labellist \small\hair 2pt
\pinlabel {$h$} [b] at 388 188
\pinlabel {$v$} [r] at 468 116
\pinlabel {$\kappa_+$} at 104 151
\pinlabel {$\kappa'_+$} at 288 151
\pinlabel {$\kappa_-$} [t] at 221 70
\endlabellist
\centering
\includegraphics[scale=0.5]{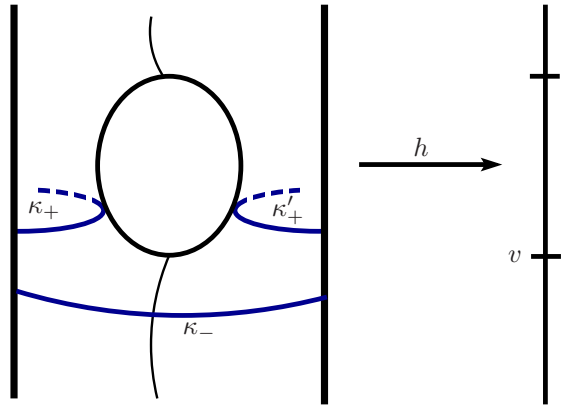}
\caption{The arc $\kappa_-$ is a pre-active arc at the critical value $v$ and the arcs $\kappa_+$ and $\kappa'_+$ are post-active arcs at $v$.}
\label{Fig: activearcs}
\end{figure}

If an arc in $\sigma_- \cup \sigma_+$ is not active, it is \defn{inactive}. Since all arcs and circles of $T_t \cap S$ are essential in both surfaces for $t \in [a,b]$, an arc $\kappa_- \subset \sigma_-$ is isotopic in $S$ to an arc $\kappa_+ \subset \sigma_+$ if and only if its projection to $T$ is isotopic in $T$ to the projection of $\kappa_+$. Let $\mc{I}$ be the union of all the inactive arcs. If $\kappa_- \subset \sigma_-$ is an inactive arc component, then there is a corresponding arc $\kappa_+$ in $\sigma_+$ such that $\kappa_-$ and $\kappa_+$ are isotopic in both $S$ and in $T_K$ (under the projection of $T_t - \inter{\eta}(K)$ with $T_K$).

Let $\mc{P}$ be the closure of the components of $S - \mc{A}$. For a component $P_k \subset \mc{P}$, let $b_k$ denote the number of copies of active arcs in $\boundary P_k$ (counted with multiplicity). Define the \defn{index} of $P_k$ to be
 \[
 J(P_k) = b_k/2 - \chi(P_k).
 \]
As in \cite[Theorem 3.1]{MHL1}, note the following:
\begin{enumerate}
\item[(a)] $J(P_k) \geq 0$ for all components $P_k \subset \mc{P}$.
\item[(b)] If a component $P_k \subset \mc{P}$ contains a critical point of $h|_S$, then $J(P_k) \geq 1$
\item[(c)] A component $P_k \subset \mc{P}$ containing a critical point of $h|_S$ has at most two post-active arcs in its boundary.
\item[(d)] $\sum_{P_k \subset \mc{P}} J(P_k) = -\chi(S)$.
\end{enumerate}

Let $Q$ be the total number of post-active arcs. By (c), $Q \leq 2|\mc{V}|$. By (a) and (b), $2|\mc{V}| \leq 2\sum_{P_k \subset \mc{P}} J(P_k)$. Hence, by (d):
\begin{equation}\label{fundamental ineq}
Q \leq -2\chi(S).
\end{equation}

Let
\[ v_1 < v_2 < \hdots < v_{m-1}\]
be the critical values of $h|_S$ in $(a, b)$ and set $v_0 = a$ and $v_m = b$. Let $q_i = (v_i + v_{i-1})/2$. A \defn{constant path} is a sequence of inactive arcs $\kappa_1, \hdots, \kappa_{m}$ with $\kappa_i \subset T_{q_i} \cap S$ and all $\kappa_i$ mutually isotopic in both $S$ and $T$.

Suppose that $(\kappa_i)$ is a constant path and identify each $T_t$ with $T$. Let $\gamma_1$ be the frontier of a regular neighborhood of $\kappa_1$ in $T$. If $\alpha$ is a circle, let $\gamma_0 = \alpha$; otherwise let $\gamma_0$ be the frontier of a regular neighborhood in $T$ of $\alpha$. If $\beta$ is a circle, let $\gamma_2 = \beta$, otherwise let $\gamma_2$ be the frontier of a regular neighborhood of $\beta$ in $T$. Note that $\gamma_0$, $\gamma_1$, and $\gamma_2$ are all essential circles in $T_K$. Recall also that the interior of $\kappa_1$ is disjoint from $\alpha$, the interior of $\kappa_m$ is disjoint from $\beta$, and $\kappa_1$ and $\kappa_m$ are isotopic in $S$. Thus, if neither endpoint of $\kappa_1$  is on the same boundary component of $T_K$ as an endpoint of either $\alpha$ or $\beta$ then $\gamma_0, \gamma_1, \gamma_2$ is a path of length 2 in $\mc{C}(T)$. See Figure \ref{Fig: path}. However, $\gamma_0 \in \mc{D}^\down_\mc{C}$ and $\gamma_2 \in \mc{D}^\up_\mc{C}$, so $d_\mc{C}(T) \leq 2$, contradicting the hypotheses of the theorem. Consequently, whenever $(\kappa_i)$ is a constant path, one endpoint of $\kappa_1$ lies on a component of $\boundary T_K$ adjacent to $\alpha$ or $\beta$.

\begin{figure}[ht]
\labellist \small\hair 2pt
\pinlabel {$1$} [r] at 2 395
\pinlabel {$2$} [br] at 14 431
\pinlabel {$3$} [br] at 41 457
\pinlabel {$4$} [b] at 76 467
\pinlabel {$\hdots$} at 74 354
\pinlabel {$\hdots$} at 319 329
\pinlabel {$\hdots$} at 562 347
\pinlabel {$\hdots$} at 562 80
\pinlabel {$\hdots$} at 74 65
\pinlabel {$\hdots$} at 319 143
\pinlabel {$|\boundary_K S|\Delta$} [tr] at 13 359
\pinlabel {$\alpha$} [br] at 209 417
\pinlabel {$\beta$} [bl] at 319 11
\pinlabel {$\kappa_1$} [r] at 263 273
\endlabellist
\centering
\includegraphics[scale=0.5]{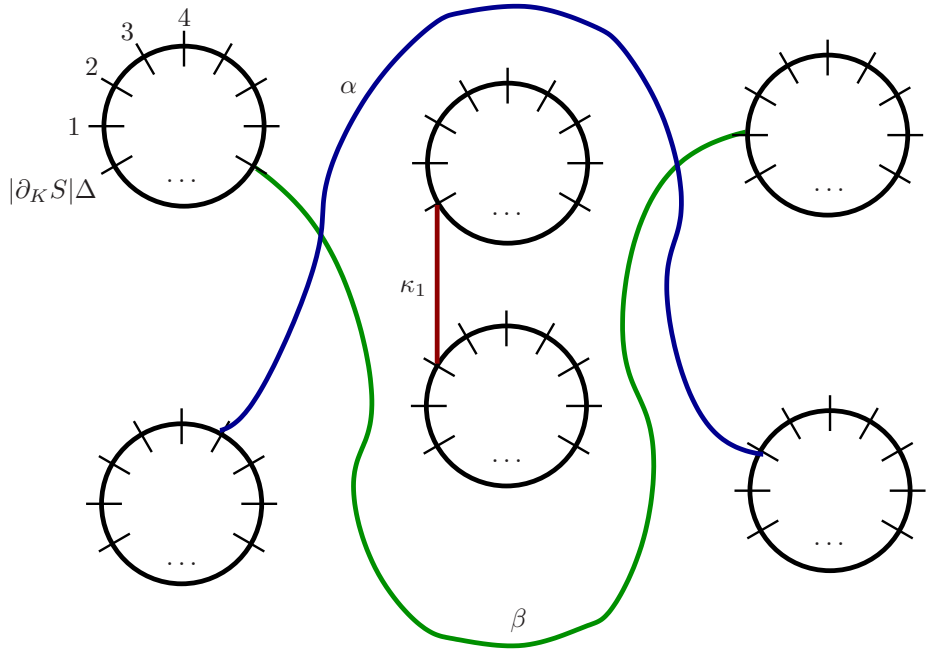}
\caption{The loops enclosing $\alpha$, $\beta$, and $\kappa_1$ and the boundary components of $T_K$ adjacent to their endpoints form a path of length 2 in $\mc{C}(T)$.} 
\label{Fig: path}
\end{figure}

Since $K$ is a knot, for any regular value $t$ of $h|_S$ and any boundary component $\sigma_t$ of $T_t - \inter{\eta}(K)$, there are exactly $|\boundary_K S|\Delta$ arcs of $T_t \cap S$ adjacent to $\sigma_t$. On $\sigma_t$, label the intersection points with $\boundary S$,
\[
1, \hdots, |\boundary_K S|\Delta
\]
with the labelling chosen so that it remains constant for all $t$. We can consider those labels to lie in a component $\sigma$ of $\boundary T_K$. Call a label in $\sigma$ \defn{active} if, for some $t \in [a,b]$, it is adjacent to an active arc and \defn{inactive} otherwise. Each inactive label corresponds to an endpoint of an arc in a constant path. Each arc in a constant path is adjacent to one of the components of $\boundary T_K$ incident to either $\alpha$ or $\beta$, so there are at most $8|\boundary_K S|\Delta$ inactive labels in $\boundary T_K$. There are $2b(T)|\boundary_K S|\Delta$ labels in $\boundary T_K$, so there are at least $(2b(T) - 8)|\boundary_K S|\Delta$ active labels. Each active arc is adjacent to two active labels. Thus, by Inequality \eqref{fundamental ineq},
\[
(b(T) - 4)|\boundary_K S|\Delta \leq Q \leq -2\chi(S).
\]
We have $-2\chi(S) = 4\genus(S) - 4|S| +2 |\boundary_0 S|+ 2|\boundary_K S|$.
Consequently,
\[
(b(T) - 4)\Delta \leq \big(4\genus(S) - 4|S| +2 |\boundary_0 S|\big)/|\boundary_K S| + 2.
\]\end{proof}

\begin{proof}[Proof of Corollary \ref{apps2}]
Let $K$ be a knot in a closed 3-manifold $M$. Let $S$ be a compact, connected, orientable essential surface of genus $g$ in $M_K$. Assume that $S$ has non-empty and non-meridional boundary. If $T$ is a bridge surface for $(M,K)$ such that $\boundary T_K \cap \boundary S$ meet minimally, then there cannot be a component of $T \cap S$ that is essential in $S$ but inessential in $T$, for then $S$ would be compressible or boundary compressible in $M_K$. Since $\boundary M_K$ is a torus, this would contradict the assumption that $S$ is essential. Thus, any component of $T_K \cap S$ that is essential in $S$ is also essential in $T$. Let $h$ be a sweepout for $M$ corresponding to $T$. Assume that $h$ has been isotoped so that $h|_S$ is Morse with critical points at distinct heights and so that $|\boundary S \cap \boundary T_t|$ is constant on $(0,1)$. When $t$ is near 1, every component of $T_t \cap S$ bounds a disc or boundary compressing disc in $S \cap T_\up$. When $t$ is near 0, every component of $T_t \cap S$ bounds a disc or boundary compressing disc in $S \cap T_\down$. Standard arguments (see, for example \cite[Corollary 3.2]{MHL1}) imply that there are regular values $a < b$ for $h|_S$ such that for every regular value $t \in [a,b]$ every component of $T_t \cap S$ is essential in $S$, and, therefore, also in $T_t$. The interval $[a,b]$ is essential for $S$ relative to $T$. We may, in fact, pick $[a,b]$ to be maximally essential. Theorem \ref{Main Theorem} implies, therefore, that if $d_\mc{C}(T) \geq 3$, then
\begin{equation}\label{bridge bound}
b(T) \leq \big(4g - 4 \big)/|\boundary S|\Delta + 2/\Delta + 4.
\end{equation}

If $S$ is planar,  then $b(T) < 6$. Since $b(T)$ is an integer, $b(T) \leq 5$.

If $S$ is non-planar, then
\[
b(T) \leq 4g - 4 + 2/\Delta + 4 \leq 4g + 2.
\]
Since $4g + 2 \geq 6$ if $g\geq 1$, we have proven our corollary.
\end{proof}

\begin{proof}[Proof of Theorem \ref{apps1}]
Assume that $K$ is non-trivial. Let $S$ be a minimal genus Seifert surface for $K$ of genus $g \geq 1$ (such a surface always exists if $K$ is null-homologous in $M$). Corollary \ref{apps2} implies $ b(T) \leq 4g + 2$. This is Conclusion (1).

If $K$ has a reducing surgery, let $\wihat{S}$ be an essential sphere in the surgered manifold that intersects the core of the surgery solid torus $\wihat{K}$ minimally. The surface $S = \wihat{S} \cap M_K$ is an essential non-meridional planar surface in $M_K$, so Corollary \ref{apps2} implies
\[
b(T) \leq 5,
\]
giving Conclusion (2).

If $K$ is atoroidal, but has a toroidal surgery, let $\wihat{S}$ be an essential torus in the surgered manifold that intersects the core of the surgery solid torus $\wihat{K}$ minimally. Let $S = \wihat{S} \cap M_K$. The surface $S$ is an essential non-meridional genus 1 surface in $M_K$. Corollary \ref{apps2} implies
\[
b(T) \leq 6.
\]
If the surgery slope is non-integral (i.e. if $\Delta \geq 2$) then inequality \eqref{bridge bound} gives the better bound of $b(T) \leq 5$.
\end{proof}

\begin{bibdiv}
\begin{biblist}
\bibselect{BridgeNumber}
\end{biblist}
\end{bibdiv}

%\bibliographystyle{amsplain}
%\bibliography{surgery}

\end{document}